\newtheorem{theorem}{Theorem}[section]
\newtheorem{lemma}[theorem]{Lemma}
\theoremstyle{remark}
\newtheorem{remark}[theorem]{Remark}
\newtheorem{example}[theorem]{Example}
\numberwithin{equation}{section}
\newcommand{\Q}{{\mathbb Q}}
\title[Duality/Sum formulas for iterated integrals]
{
Duality/Sum formulas for iterated integrals 
and their application to multiple zeta values \\[+.5em] 
}
\author{Minoru Hirose, Kohei Iwaki, Nobuo Sato, Koji Tasaka}
\keywords{Iterated integrals, Multiple zeta values}
\subjclass[2010]{Primary~11M32, Secondary~11F67, 33E20} 
\address[Minoru Hirose]{Multiple Zeta Research Center, Kyushu University}
\email{m-hirose@math.kyushu-u.ac.jp} 
\address[Kohei Iwaki]{Graduate School of Mathematics, Nagoya University}
\email{iwaki@math.nagoya-u.ac.jp}
\address[Nobuo Sato]{Graduate School of Mathematics, Kyoto University}
\email{saton@math.kyoto-u.ac.jp}
\address[Koji Tasaka]{Department of Information Science and Technology, Aichi Prefectural University}
\email{tasaka@ist.aichi-pu.ac.jp}
\begin{document}

\maketitle

\begin{abstract}
We investigate linear relations 
among a class of iterated integrals on the Riemann sphere 
minus four points $0,1,z$ and $\infty$.
Generalization of the duality formula and the sum formula 
for multiple zeta values to the iterated integrals are given.
\end{abstract}

\section{Introduction and main results}
\label{section:introduction}

The multiple zeta value, originally considered by Euler 
\cite{Euler}, 
is defined for positive integers $k_1,\ldots,k_r$ with $k_r\ge2$ by
\[
\zeta(k_1,\ldots,k_r)=\sum_{0<m_1<\cdots<m_r} 
\frac{1}{m_1^{k_1}\cdots m_r^{k_r}}. 
\]
These real numbers satisfy numerous linear relations 
over $\Q$ and have been studied in recent years 
by many mathematicians and physicists 
(see \cite{EF, Goncharov, IKZ, Okuda, Zagier} for example), 
but their structure is not completely understood at the time of writing. 

In this paper we present a new approach to linear relations 
among multiple zeta values through a study of linear relations 
among the iterated integrals on 
${\mathbb P}^1 \setminus\{ 0,1,z,\infty \}$
with a complex variable 
$z \in {\mathbb C}\setminus [0,1]$:
\begin{eqnarray*}
I(0;a_1,\dots,a_n;1) = \int_{0}^{1} \frac{dt_{n}}{t_n-a_n} 
\int_{0}^{t_{n}}\frac{dt_{n-1}}{t_{n-1}-a_{n-1}} 
\cdots 
\int_{0}^{t_2}\frac{dt_{1}}{t_1-a_1}
\quad 
(a_1,\dots,a_n \in \{0,1,z\}). 
\end{eqnarray*}
We assume $a_1 \ne 0$ and $a_n \ne 1$ for the convergence of the integral.
Since the iterated integral $I(0;a_1,\dots,a_n;1)$ 
is a holomorphic function of $z$, we may use analytic methods 
to obtain relations among these iterated integrals. 
In fact, a formula for the differentiation of 
$I(0;a_1,\dots,a_n;1)$ with respect to $z$ 
(Theorem 2.1; see also \cite[Lemma 3.3.30]{Erik2}) 
is useful to prove linear relations.  
Moreover, using the iterated integral expression 
\begin{equation}\label{itintex}
\zeta(k_1,\dots,k_r) = 
(-1)^r I(0;1,\overbrace{0,\dots,0}^{k_1-1},\dots,1,
\overbrace{0,\dots,0}^{k_r-1};1)
\end{equation}
of multiple zeta values due to Kontsevich and Drinfeld, 
one may obtain linear relations among multiple zeta values 
as a specialization of linear relations among 
$I(0;a_1,\dots,a_n;1)$'s.
With this approach, we rediscover the so-called duality 
formula and the sum formula satisfied by multiple zeta values 
as a specialization of the linear relation among 
the iterated integrals obtained in this paper 
(Theorems 1.1 and 1.2). Our ultimate goal is to capture 
all linear relations among the iterated integral. 
This will be developed minutely in the upcoming paper 
\cite{HiroseSato1} (see also \cite{HiroseSato2}).

Let us formulate our main results.
It is convenient to use the algebraic 
setup given by Hoffman \cite{Hoffman2} for multiple zeta values. 
Let $\mathcal{A}=\Q\langle e_0,e_1,e_z\rangle$ be the non-commutative 
polynomial algebra over $\Q$ in three indeterminate elements 
$e_0,e_1$ and $e_z$, and $\mathcal{A}^0$ be its linear subspace 
$\Q+\Q e_z +e_z\mathcal{A}e_z+e_z\mathcal{A}e_0+
e_1\mathcal{A}e_z+e_1\mathcal{A}e_0$. 
We also denote by $L$ the $\Q$-linear map that assigns 
the iterated integral $I(0; a_1,a_2,\ldots,a_n;1)$
to a word $e_{a_1}e_{a_2}\cdots e_{a_n}$ in $\mathcal{A}^0$, 
which by definition converges absolutely. 
For example, by \eqref{itintex} we have 
$L(e_1e_0^{k_1-1}\cdots e_1 e_0^{k_r-1}) 
= (-1)^{r}\zeta(k_1,\dots,k_r)$. 

Our duality formula is stated as follows.
\begin{theorem}[Duality formula] \label{thm:B-duality}
Let $\tau$ be the anti-automorphism on $\mathcal{A}$ 
(i.e., $\tau(xy) = \tau(y)\tau(x)$ holds for $x,y \in {\mathcal A}$)
given by
\[ 
\tau(e_0)=e_z-e_1,\ \tau(e_1)=e_z-e_0,\ \tau(e_z)=e_z.
\]
Then, for any $w\in \mathcal{A}^0$, we obtain
\begin{equation} \label{eq:duality-intro}
L(w-\tau(w))=0.
\end{equation}
\end{theorem}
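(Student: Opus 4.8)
The plan is to realize the anti-automorphism $\tau$ geometrically, as the combination of a Möbius change of variables and a reversal of the path of integration. Writing $\omega_a=\frac{dt}{t-a}$ for $a\in\{0,1,z\}$ and letting $\gamma_0$ denote the straight path from $0$ to $1$, we have $L(e_{a_1}\cdots e_{a_n})=\int_{\gamma_0}\omega_{a_1}\cdots\omega_{a_n}$ in Chen's sense. I would introduce the Möbius transformation
\[
\phi(t)=\frac{z(t-1)}{t-z},
\]
which is an involution of $\mathbb{P}^1$ permuting the four marked points by $0\leftrightarrow1$ and $z\leftrightarrow\infty$, and which preserves $\mathbb{P}^1\setminus\{0,1,z,\infty\}$. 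I would first record that $\tau$ preserves $\mathcal{A}^0$: a word starting in $e_1$ or $e_z$ and ending in $e_0$ or $e_z$ is sent by $\tau$ to a sum of words of the same admissible shape, so that $L(\tau(w))$ is again an absolutely convergent integral.

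The key computation is the effect of $\phi$ on the three logarithmic forms. Using $d\log(\phi(t)-b)=d\log(t-\phi^{-1}(b))-d\log(t-\phi^{-1}(\infty))$ for a Möbius map, a direct calculation gives
\[
\phi^{*}\omega_0=\omega_1-\omega_z,\qquad
\phi^{*}\omega_1=\omega_0-\omega_z,\qquad
\phi^{*}\omega_z=-\omega_z.
\]
Comparing with $\tau(e_0)=e_z-e_1$, $\tau(e_1)=e_z-e_0$, $\tau(e_z)=e_z$, I observe that $\phi^{*}\omega_a$ is exactly $-1$ times the form attached to $\tau(e_a)$. Thus each letter acquires a sign $(-1)$ under $\phi^{*}$, and the letter-level part of $\tau$ is precisely $-\phi^{*}$.

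Next I would assemble the identity. Writing $\Omega_a$ for the form attached to $\tau(e_a)$, and using that $\tau$ reverses the order of letters together with the naturality of iterated integrals under the holomorphic map $\phi$,
\[
L(\tau(w))=\int_{\gamma_0}\Omega_{a_n}\cdots\Omega_{a_1}
=(-1)^n\int_{\gamma_0}\phi^{*}\omega_{a_n}\cdots\phi^{*}\omega_{a_1}
=(-1)^n\int_{\phi\circ\gamma_0}\omega_{a_n}\cdots\omega_{a_1}.
\]
Since $\phi(0)=1$ and $\phi(1)=0$, the path $\phi\circ\gamma_0$ runs from $1$ to $0$; applying Chen's path-reversal rule $\int_\delta\eta_1\cdots\eta_n=(-1)^n\int_{\delta^{-1}}\eta_n\cdots\eta_1$ converts this into $(-1)^n\int_{(\phi\circ\gamma_0)^{-1}}\omega_{a_1}\cdots\omega_{a_n}$. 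The two factors of $(-1)^n$ cancel, leaving $L(\tau(w))=\int_{(\phi\circ\gamma_0)^{-1}}\omega_{a_1}\cdots\omega_{a_n}$.

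The main obstacle is the final geometric input: one must show that $(\phi\circ\gamma_0)^{-1}$, which again runs from $0$ to $1$, is homotopic rel endpoints to $\gamma_0$ inside $\mathbb{P}^1\setminus\{0,1,z,\infty\}$, for then homotopy invariance of the convergent iterated integral yields $L(\tau(w))=L(w)$, i.e. \eqref{eq:duality-intro}. This is exactly where the hypothesis $z\in\mathbb{C}\setminus[0,1]$ enters: since $z\notin[0,1]$ the map $\phi$ has no pole on $\gamma_0$, and $\phi(\gamma_0)$ is the arc of the circle (or line) through $0,1,z$ that avoids $z=\phi(\infty)$, so the closed loop $\gamma_0\cdot(\phi\circ\gamma_0)$ bounds a puncture-free disc and is null-homotopic; I would verify the degenerate subcase $z\in\mathbb{R}\setminus[0,1]$, where $\phi(\gamma_0)=\gamma_0$ as a set, separately. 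As an alternative that sidesteps the homotopy entirely, one can argue analytically: differentiate both $L(w)$ and $L(\tau(w))$ in $z$ via the differentiation formula (Theorem 2.1), show that the difference satisfies a linear differential equation built from lower-complexity instances of the same duality, and match a single boundary value as $z$ degenerates.
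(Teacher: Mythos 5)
Your proposal is correct and is essentially the paper's own proof: the paper uses the identical M\"obius transformation $\gamma(t)=\frac{zt-z}{t-z}=\frac{z(t-1)}{t-z}$, the identical pullback computation $\gamma^{*}\omega_0=\omega_1-\omega_z$, $\gamma^{*}\omega_1=\omega_0-\omega_z$, $\gamma^{*}\omega_z=-\omega_z$, and the same mechanism of combining this pullback with reversal of the path to realize the anti-automorphism $\tau$. The only divergence is the endgame: where you build a null-homotopy of the lens-shaped loop for arbitrary $z\in\mathbb{C}\setminus[0,1]$ (which works, but leans on homotopy invariance for convergent iterated integrals whose endpoints sit at punctures), the paper instead checks the identity for real $z>1$, where $\gamma$ maps the segment $[0,1]$ onto itself with opposite orientation so the two integrals coincide outright, and then extends to all $z\in\mathbb{C}\setminus[0,1]$ by the identity theorem --- essentially the analytic-continuation alternative you sketch at the end.
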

Our derivation of the formula \eqref{eq:duality-intro} 
can be regarded as an application of Okuda's result 
(\cite[Theorem 4.2]{Okuda}). 
Thanks to the existence of a special M\"obius transformation 
on ${\mathbb P}^1$ which preserves the subset $\{0,1,z,\infty \}$, 
we could derive the equality \eqref{eq:duality-intro}.
Taking $z \to \infty$ in \eqref{eq:duality-intro}, 
we can recover the duality formula for multiple zeta values 
\cite[Section 9]{Zagier} (see also \cite{Hoffman}):
\begin{equation} \label{eq:duality-MZV}
I(0;a_1,\dots,a_{n};1) = (-1)^{n}
I(0;1-a_n,\dots,1-a_1;1)\qquad 
(a_1, \dots, a_n \in \{0,1 \})
\end{equation}
On the other hand, setting $z=-1$ in  
\eqref{eq:duality-intro},
we can also derive the Broadhurst duality 
(see \cite[eq.~(127)]{Broadhurst}, 
\cite[Example 4.2]{Okuda} and \cite[Section 4]{EF}), 
which is known as a family of $\Q$-linear relations 
among alternating Euler sums.

Our sum formula is stated as follows. 
\begin{theorem}[Sum formula] \label{thm:sum-formula}
For any integers $k, r$ satisfying $k\ge2$ and 
$k \ge r \ge 1$, we have
\begin{align}\nonumber
 & (-1)^{r}\sum_{\substack{k_{1}+\cdots+k_{r}=k\\
k_{i}\ge1,~k_{r}\ge2}}
L(e_{z}e_{0}^{k_{1}-1}e_{1}e_{0}^{k_{2}-1}\cdots e_{1}e_{0}^{k_{r}-1})\\
 & =-L(e_{1}e_{0}^{k-1})+L((e_{1}-e_{z})(e_{0}-e_{z})^{r-1}e_{0}^{k-r}). 
 \label{eq:sum-formula-intro}
\end{align}
\end{theorem}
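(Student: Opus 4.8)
The plan is to treat the two sides of \eqref{eq:sum-formula-intro} as holomorphic functions of $z$ on the connected domain $\C\setminus[0,1]$ and to prove their equality by a first-order differential-equation argument built on the differentiation formula (Theorem 2.1). Write $F_{k,r}(z)$ and $G_{k,r}(z)$ for the left- and right-hand sides of \eqref{eq:sum-formula-intro}. Since $F_{k,r}-G_{k,r}$ is holomorphic on a connected set, it suffices to prove the two assertions
\[
\text{(i)}\quad \partial_z F_{k,r}=\partial_z G_{k,r}, \qquad
\text{(ii)}\quad \lim_{z\to\infty}\bigl(F_{k,r}(z)-G_{k,r}(z)\bigr)=0,
\]
as (i) forces $F_{k,r}-G_{k,r}$ to be constant and (ii) identifies that constant as $0$.

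Assertion (ii) is immediate: as $z\to\infty$ the one-form $dt/(t-z)$ attached to each letter $e_z$ tends to $0$ uniformly on $[0,1]$, so every iterated integral containing at least one $e_z$ tends to $0$. Hence every summand of $F_{k,r}$ tends to $0$, while in $G_{k,r}$ the factors $e_1-e_z$ and $e_0-e_z$ tend to $e_1$ and $e_0$, giving $G_{k,r}(z)\to -L(e_1e_0^{k-1})+L(e_1e_0^{k-1})=0$.

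The substance is (i), which I would prove by induction on the weight $k$, using the fact that Theorem 2.1 lowers the weight by one. On the left each word $e_ze_0^{k_1-1}e_1\cdots e_1e_0^{k_r-1}$ carries a single $e_z$, and a short computation with the differentiation formula produces exactly two kinds of terms: deleting the leading $e_z$ contributes, only for the compositions with $k_1=1$, a fixed-depth multiple zeta value times $\tfrac{1}{z-1}-\tfrac{1}{z}$; while deleting the letter immediately after $e_z$ contributes, after re-indexing the composition, copies of the weight-$(k-1)$ left-hand side at depths $r$ and $r-1$, with explicit prefactors in $1/z$ and $1/(z-1)$. For the right-hand side it is convenient first to apply the duality formula (Theorem~\ref{thm:B-duality}): since $\tau(e_1-e_z)=-e_0$, $\tau(e_0-e_z)=-e_1$ and $\tau(e_0)=e_z-e_1$, one gets $L\bigl((e_1-e_z)(e_0-e_z)^{r-1}e_0^{k-r}\bigr)=(-1)^rL\bigl((e_z-e_1)^{k-r}e_1^{r-1}e_0\bigr)$, in which the occurrences of $e_z$ are confined to the initial block. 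A parallel application of Theorem 2.1 then reorganizes $\partial_z G_{k,r}$ into the same combination of weight-$(k-1)$ right-hand sides and fixed-depth multiple zeta values. The inductive hypothesis equates the weight-$(k-1)$ blocks on the two sides and the fixed-depth multiple zeta value terms cancel directly, yielding $\partial_z F_{k,r}=\partial_z G_{k,r}$. The base cases are $r=1$, where the single composition gives $F_{k,1}=-L(e_ze_0^{k-1})$ and the right-hand side reduces to the same value, and the diagonal $k=r$, where the left-hand sum is empty and the required vanishing of the right-hand side reduces, via Theorem~\ref{thm:B-duality}, to a classical multiple zeta value identity (for $r=2$ this is $L(e_ze_z)=L(e_1e_z)+L(e_ze_0)$, i.e.\ $\tau(e_ze_0)=e_ze_z-e_1e_z$).

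I expect the main obstacle to be the combinatorial bookkeeping hidden in step (i). Applying Theorem 2.1 to the right-hand word forces one to expand $(e_z-e_1)^{k-r}e_1^{r-1}e_0$ (or $(e_1-e_z)(e_0-e_z)^{r-1}e_0^{k-r}$ directly) into monomials containing several, and possibly adjacent, occurrences of $e_z$; for adjacent $e_z$'s the individual coefficients delivered by the differentiation formula are singular and must be combined into their finite parts before the expression makes sense. One must then arrange the re-indexing of the composition sums so that the resulting weight-$(k-1)$ pieces line up termwise with the left- and right-hand sides of the sum formula at weight $k-1$, and in particular verify that the fixed-depth multiple zeta value contributions produced on the two sides are literally identical so that they cancel. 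Controlling these cancellations uniformly in $r$ is the crux of the argument.
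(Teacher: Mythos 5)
Your overall strategy is the same as the paper's: view both sides of \eqref{eq:sum-formula-intro} as holomorphic functions on the connected domain $\C\setminus[0,1]$, prove equality of $z$-derivatives by induction on $k$ via Theorem \ref{thm:differential-formula}, fix the constant of integration by letting $z\to\infty$, and settle $r=1$ directly and $r=k$ by Theorem \ref{thm:B-duality}. Your derivative of the left-hand side is correct, and your dualized treatment of the right-hand side is a workable variant (the paper instead splits off the first letter of $(e_1-e_z)(e_0-e_z)^{r-1}e_0^{k-r}$ and uses \eqref{eq:e_a3} to get \eqref{eq:e_3}). However, there is a genuine gap at the point where you assert that the ``fixed-depth multiple zeta value'' constants produced on the two sides are ``literally identical so that they cancel.'' They are not. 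Write $f_{k,r}$ for the sum on the left (without sign), $g_{k,r}=L((e_1-e_z)(e_0-e_z)^{r-1}e_0^{k-r})$, and $h_{k-1,r-1}=\lim_{z\to1+0}f_{k-1,r-1}(z)$, a sum of depth-$(r-1)$, weight-$(k-1)$ multiple zeta values. Then
\begin{align*}
\frac{d}{dz}f_{k,r}&=-\frac{1}{z}f_{k-1,r}-\frac{1}{z-1}f_{k-1,r-1}+\Bigl(\frac{1}{z-1}-\frac{1}{z}\Bigr)h_{k-1,r-1},\\
\frac{d}{dz}g_{k,r}&=-\frac{1}{z}g_{k-1,r}+\frac{1}{z-1}g_{k-1,r-1},
\end{align*}
so the constant $h_{k-1,r-1}$ occurs only on the $f$-side; on the $g$-side no such term appears (and in your dualized version the only constants arising are multiples of the single word $L(e_1^{k-2}e_0)$, which does not coincide with $h_{k-1,r-1}$ termwise). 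Substituting the induction hypothesis at depths $r$ and $r-1$ into the $z$-dependent terms leaves
\[
\frac{d}{dz}\bigl((-1)^{r}f_{k,r}-g_{k,r}\bigr)
=\Bigl(\frac{1}{z-1}-\frac{1}{z}\Bigr)\bigl((-1)^{r}h_{k-1,r-1}-L(e_1e_0^{k-2})\bigr),
\]
and the vanishing of the right-hand side is exactly the classical sum formula $\sum\zeta(k_1,\ldots,k_{r-1})=\zeta(k-1)$ in weight $k-1$ (nontrivial as soon as $r\ge3$) --- a statement of the same strength as what is being proved, which cannot be dismissed as bookkeeping. The missing idea, which is the paper's key additional step, is to extract this evaluation from the induction hypothesis itself: letting $z\to1+0$ in the weight-$(k-1)$ identity and using $\lim_{z\to1+0}g_{k-1,r-1}(z)=0$ while $f_{k-1,r-1}(z)\to h_{k-1,r-1}$ gives precisely $(-1)^{r-1}h_{k-1,r-1}=-L(e_1e_0^{k-2})$. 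With this inserted your induction closes; without it (or an external appeal to Granville--Zagier, which would sacrifice the self-contained new proof) it does not.

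A secondary point: the obstacle you single out as the crux --- that adjacent occurrences of $e_z$ make ``the individual coefficients delivered by the differentiation formula singular,'' requiring ``finite parts'' --- does not exist. Theorem \ref{thm:differential-formula} holds verbatim for every convergent word: the operator $\partial_{z,a}$ has coefficients in $\{0,\pm1\}$, and an adjacent pair $e_ze_z$ contributes nothing, since $\delta_{\{z,z\},\{z,a\}}=0$ for $a\in\{0,1\}$. Singular expressions occur only inside the proof of that theorem (the integrals $J_i$ with $a_i=a_{i+1}$), where they cancel in pairs before the formula is ever applied. So the combinatorial bookkeeping you fear is benign, while the real difficulty --- evaluating the multiple zeta value constants --- is the step your proposal glosses over.
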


We remark that, taking the limit $z \to 1+0$, 
the above formula is reduced to 
the following sum formula for multiple zeta values: 
For any integers $k>r>1$, we can take the limit in the both 
hand sides of \eqref{eq:sum-formula-intro} and we obtain
\begin{equation}  \label{eq:sum-MZV}
\sum_{\substack{k_1+\cdots+k_r=k\\k_i \ge1, ~k_r\ge2}} 
\zeta(k_1,\ldots,k_r) = \zeta(k).
\end{equation}
This formula was first proved by Granville \cite{Granville} and 
Zagier (see also \cite[\S 3]{Hoffman}) independently. 
Our method gives a new proof of \eqref{eq:sum-MZV}. 

Here we briefly illustrate a few examples of relations, 
which might be useful to understand the strategy of our proof.
As special cases of Theorem \ref{thm:sum-formula}, 
we will prove
\begin{align}
\label{eg1} & I(0;z,1,0;1)+I(0;1,z,0;1)+I(0;z,0,0;1)-I(0;z,z,0;1)=0
\end{align}
for the case of $(k,r)=(3,2)$, and 
\begin{align}
\label{eg2} & I(0;z,z;1)-I(0;z,0;1)-I(0;1,z;1)=0
\end{align}
for is the case $(k,r)=(2,2)$. 
We can show that the equality \eqref{eg1} can be obtained by
integrating the both hand sides of \eqref{eg2}. 
(Integration constants can be determined by considering 
the limit $z \to \infty$). 
This is a consequence of a formula describing 
the differentiation of the iterated integrals with respect to $z$ 
(Theorem \ref{thm:differential-formula}). 
In general, we can check that the differentiation 
reduces the value of $k$ or $r$ in the desired formula 
\eqref{eq:sum-formula-intro}, and hence a proof 
by the induction works perfectly. 
The differential formula plays an essential role here, 
and it is the main advantage to consider the iterated 
integral with a complex variable $z$.

This paper is organized as follows.  
In Section \ref{section:iterated-integrals} we fix notations 
and give a proof of the formula for differentiation of 
the iterated integrals with respect to $z$ 
(Theorem \ref{thm:differential-formula}). 
This differential formula can be regarded as a special case 
of \cite[Lemma 3.3.30]{Erik2}.
We give another proof of the fact.
Section \ref{section:proof} is devoted to proofs of 
our main theorems along the ideas presented above. 

\section*{Acknowledgements}
This work was partially supported by JSPS 
KAKENHI Grant Number 16H07115, 16H06337 and 16K17613.

\section{Iterated integrals with a complex variable
and the differential formula}
\label{section:iterated-integrals}


\subsection{Definitions and notations}

In this paper we assume that $z \in {\mathbb C} \setminus [0,1]$.
For any $a_1, \dots, a_n \in \{0,1,z\}$ satisfying 
$a_1 \ne 0$ and $a_n \ne 1$ we use the standard notation 
$I(0;a_{1},\ldots,a_{n};1)$ for the iterated integral: 
\begin{align} 
I(0;a_1,\dots,a_n;1) &:= 
\mathop \int_{0<t_1<t_2<\cdots<t_n<1} 
\prod_{i=1}^{n} \frac{dt_i}{t_i - a_i} \nonumber \\
&~ = \int_{0}^{1} \frac{dt_{n}}{t_n-a_n} 
\int_{0}^{t_{n}}\frac{dt_{n-1}}{t_{n-1}-a_{n-1}} 
\cdots 
\int_{0}^{t_2}\frac{dt_{1}}{t_1-a_1} . \label{eq:it-int}
\end{align}
The above iterated integral is called the
hyperlogarithm which was introduced long ago 
(e.g., \cite{Lapp-Danilevsky}), and has been studied 
by many mathematicians and physicists 
(see, \cite{Broadhurst, Erik2, Goncharov} for example). 
As a function of $z$, 
the iterated integral $I(0;a_1,\dots,a_n;1)$ is a (single-valued) 
holomorphic function on the domain ${\mathbb C} \setminus [0,1]$, 
and $\lim_{z \to \infty} I(0;a_1,\dots,a_n;1) = 0$ holds if 
$z \in \{a_1,\dots, a_n\}$. 
%
The ${\mathbb Q}$-linear 
space spanned by iterated integrals of such a form 
is denoted by
\[
{\mathcal Z}^{(z)} := \left\langle I(0;a_1,\dots,a_n;1) ~|~ 
n \ge 0,~a_i \in \{0,1,z \},~
a_1 \ne0,~ a_n\ne1 \right\rangle_{{\mathbb Q}}.
\]

Let $\mathcal{A} := {\mathbb Q}\langle e_0, e_1, e_z \rangle$
be the non-commutative polynomial algebra of 
words consisting of three letters $e_0, e_1, e_z$, and 
\[
\mathcal{A}^0 := 
\Q+\Q e_z +e_z\mathcal{A}e_z+e_z\mathcal{A}e_0+
e_1\mathcal{A}e_z+e_1\mathcal{A}e_0
\]
the subalgebra of convergent words. 
We also define a ${\mathbb Q}$-linear map 
$L : \mathcal{A}^0 \rightarrow \mathcal{Z}^{(z)}$ by 
\begin{eqnarray*}
L(1) := 1, \qquad
L(e_{a_1}e_{a_2}\cdots e_{a_n}) 
:= I(0;a_1,a_2,\dots,a_n;1).
\end{eqnarray*}

\subsection{Differential formula for the iterated integrals}
For any convergent word $w \in {\mathcal A}^{0}$, we regard 
the corresponding iterated integral $L(w)$ as a holomorphic 
function of $z$ on the domain ${\mathbb C} \setminus [0,1]$.
Here we show a formula which describes the differentiation 
of the iterated integral with respect to $z$. 
The formula is quite useful in the proof of our main result.

For the convenience, we write $a_0 = 0$ and 
$a_{n+1} = 1$ throughout this paper.
For $x,y\in\{0,1,z\}$, 
we define a ${\mathbb Q}$-linear map
$\partial_{x,y}:{\mathcal A}^{0} \rightarrow {\mathcal A}^{0}$ by 
\[
\partial_{x,y}(1) := 0, \quad 
\partial_{x,y}\left(e_{a_{1}}\cdots e_{a_{n}}\right):=
\sum_{i=1}^{n}\left(\delta_{\{a_{i},a_{i+1}\},\{x,y\}} 
- \delta_{\{a_{i-1},a_{i}\},\{x,y\}}\right)e_{a_{1}}
\cdots\widehat{e_{a_{i}}}\cdots e_{a_{n}},
\]
where 
\begin{eqnarray*}
\delta_{\{a,b \}, \{x,y \}} := 
\begin{cases} 
1 & \text{if $\{a,b \} = \{x,y \}$ as subsets of $\{0,1,z \}$} \\
0 & \text{otherwise}.
\end{cases}
\end{eqnarray*}
It is easy to check that $\partial_{x,y}$ is well-defined. 
Then we have the following theorem.
\begin{theorem}[Differential formula 
{(c.f., \cite[Lemma 3.3.30]{Erik2})}]
\label{thm:differential-formula}
For any $w\in {\mathcal A}^{0}$, we have
\begin{equation} \label{eq:differential-formula}
\frac{d}{dz}L(w) =\sum_{a\in\{0,1\}}
\frac{1}{z-a}L(\partial_{z,a}w).
\end{equation}
\end{theorem}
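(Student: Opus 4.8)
The plan is to differentiate $L(w)$ directly under the integral sign, starting from $L(w)=\int_{0<t_1<\cdots<t_n<1}\prod_{i=1}^n\frac{dt_i}{t_i-a_i}$. Since $z\notin[0,1]$ while each $t_i\in[0,1]$, every denominator $t_i-a_i$ stays bounded away from $0$, and the factor $\frac{1}{(t_j-z)^2}$ produced by $\frac{d}{dz}\frac{1}{t_j-z}$ is bounded on the closed simplex; hence the hypotheses for differentiation under the integral sign hold with the same domination that makes $L(w)$ convergent. Only the letters equal to $e_z$ contribute, so writing $S_z=\{j:a_j=z\}$ I get $\frac{d}{dz}L(w)=\sum_{j\in S_z}\int_{0<t_1<\cdots<t_n<1}\frac{1}{(t_j-z)^2}\prod_{i\neq j}\frac{dt_i}{t_i-a_i}$, and the task is to rewrite each such integral through the values $L(\cdot)$ of shorter words.

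The key reduction is to eliminate the double pole $\frac{1}{(t_j-z)^2}$. The cleanest route is to group the $e_z$'s into maximal runs of consecutive letters: if positions $p,\dots,q$ all carry $e_z$ while $a_{p-1},a_{q+1}\neq z$, then the inner block $\int_{t_{p-1}<t_p<\cdots<t_q<t_{q+1}}\prod_{i=p}^q\frac{dt_i}{t_i-z}$ equals $\frac{1}{m!}\bigl(\log(t_{q+1}-z)-\log(t_{p-1}-z)\bigr)^m$ with $m=q-p+1$. Differentiating this closed form in $z$ telescopes to $\bigl(\frac{1}{t_{p-1}-z}-\frac{1}{t_{q+1}-z}\bigr)$ times the analogous block of length $m-1$; summing over runs reassembles $\frac{d}{dz}L(w)$ and, crucially, replaces every double pole by the two simple boundary factors $\frac{1}{t_{p-1}-z}$ and $-\frac{1}{t_{q+1}-z}$, each attached to a neighbour of the run.

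Next I combine each boundary factor with the neighbouring letter. Using the convention $a_0=0$, $a_{n+1}=1$, the factor $\frac{1}{t_{p-1}-z}$ multiplies $\frac{1}{t_{p-1}-a_{p-1}}$ (or, when $p=1$, becomes the constant $\frac{1}{-z}=\frac{1}{a_0-z}$), and likewise at the top. Because $a_{p-1},a_{q+1}\in\{0,1\}$, the partial fraction
\[
\frac{1}{(t-a)(t-z)}=\frac{1}{z-a}\Bigl(\frac{1}{t-z}-\frac{1}{t-a}\Bigr)
\]
introduces exactly the prefactor $\frac{1}{z-a}$ demanded by the theorem. The resulting $\frac{1}{t-a}$ branch keeps the neighbour as $e_a$ and deletes one $e_z$ from the run, producing the shortened words attached to the $\delta_{\{a_{i-1},a_i\},\{z,a\}}$ and $\delta_{\{a_i,a_{i+1}\},\{z,a\}}$ terms of $\partial_{z,a}$; the $\frac{1}{t-z}$ branch instead absorbs the neighbour into the run, thereby deleting that neighbour. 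I then claim that these run-extending branches cancel in pairs — the branch from the bottom of one run against the branch from the top of the adjacent run sharing the same $\{0,1\}$-neighbour — which is precisely the cancellation encoded by the difference of the two Kronecker symbols in the definition of $\partial_{z,a}$. Matching signs together with the two endpoint cases then identifies the surviving terms with $\sum_{a\in\{0,1\}}\frac{1}{z-a}L(\partial_{z,a}w)$.

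The main obstacle is combinatorial bookkeeping rather than analysis: one must check that the signed collection of shortened words produced by all the partial-fraction branches coincides term by term with $\partial_{z,a}w$, including the pairwise cancellation of the run-extending branches and the boundary contributions at $t_0=0$ and $t_{n+1}=1$. A related subtlety is that the individual branches (or the naive position-by-position boundary terms before grouping into runs) may diverge on the faces of the simplex where two consecutive arguments coincide; these divergences cancel only after the branches are combined, so the partial fractions must be expanded and regrouped before integrating, not after. Verifying the cancellation on a small word such as $w=e_ze_0$ makes the general pattern transparent and pins down all the signs.
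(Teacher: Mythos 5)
Your proof is correct, and its skeleton coincides with the paper's: differentiate under the integral sign (justified exactly as you say), trade the double pole $1/(t-z)^2$ for simple poles attached to the neighbours of each $z$-block, apply the partial fraction $\frac{1}{(t-a)(t-z)}=\frac{1}{z-a}\bigl(\frac{1}{t-z}-\frac{1}{t-a}\bigr)$ to create the prefactors $\frac{1}{z-a}$, and match the signed shortened words against $\partial_{z,a}$. The one genuine difference is the treatment of consecutive $e_z$'s. You group them into maximal runs and differentiate the closed form $\frac{1}{m!}\bigl(\log(t_{q+1}-z)-\log(t_{p-1}-z)\bigr)^m$, so each run contributes only its two boundary terms from the outset; the paper instead integrates out one variable per $z$-position, obtaining the quantities $J_{i-1}-J_{i}$ of Lemma \ref{lem: diff J}, and then cancels the within-run terms pairwise (the step where $S_1^{({\rm I})}+S_2^{({\rm I})}$ vanishes). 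The two devices are equivalent --- your telescoping of the log-power is the paper's cancellation in integrated form --- yours makes the within-run collapse automatic at the price of first establishing the closed form, while the paper's stays term-by-term elementary. Two smaller points. First, your claim that the run-extending ($\frac{1}{t-z}$) branches ``cancel in pairs'' holds only when a letter $e_a$, $a\in\{0,1\}$, is sandwiched between two runs; when a run abuts $e_a$ on one side only, that branch survives and is needed, since it produces exactly the neighbour-deletion terms (deletion of a letter $a_i=a$ adjacent to a $z$) carried by the difference of Kronecker symbols in $\partial_{z,a}$ --- your final sentence about ``surviving terms'' indicates this is what you intend, but it should be stated explicitly. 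Second, the divergence subtlety you flag is real but not special to your route: in the paper's proof as well, individual terms such as the formal $L(e_0)$ arising from $w=e_ze_0$ diverge and disappear only in combination, which is ultimately why divergent words receive coefficient zero in $\partial_{z,a}w$; verifying this on $e_ze_0$, as you propose, is indeed the right sanity check.
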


Note that the formula can be regarded as a special case 
of a result obtained in \cite{Erik2}. 
Here we propose an alternative proof of \eqref{eq:differential-formula}
for any convergent word $w = e_{a_1}\cdots e_{a_n} \in {\mathcal A}^0$.

The claim for $n=0$ is obvious. 
For $n = 1$, the equality 
\eqref{eq:differential-formula} is easily checked as
\[
\frac{d}{dz} L(e_{z}) = \int^{1}_{0}\frac{dt}{(t-z)^2} 
= - \frac{1}{z} + \frac{1}{z-1}.
\]

Let us assume that $n \ge 2$. 
\begin{lemma}
\label{lem: diff J}
\begin{enumerate}
\item 
For any convergent word 
$w = e_{a_1}\cdots e_{a_{n}} \in {\mathcal A}^0$, we have
\begin{eqnarray}
\nonumber
\frac{d}{dz} L(w) =
\sum_{\substack{1 \le i \le n \\ a_i = z}} 
J_{i-1}(a_{0},\dots,a_{n+1}) - \sum_{\substack{1 \le i \le n \\ a_i = z}} 
J_{i}(a_{0},\dots,a_{n+1}),
\end{eqnarray}
where, for given $0 \le i \le n$, we put 
\begin{multline*}
J_{i}(a_0,\ldots,a_{n+1})\\
:=
\begin{cases}
\, \displaystyle 
\frac{1}{a_0-a_1}\, L\left(\widehat{e_{a_{1}}}e_{a_{2}}\cdots
e_{a_{n}}\right) 
& \mbox{ if }i=0,
\\[+1.em] %
\displaystyle 
\int_{a_{0}<t_{1}<\cdots<t_{i}<t_{i+2}<\cdots<t_{n}<a_{n+1}}
\frac{dt_{i}}{(t_{i}-a_{i})(t_{i}-a_{i+1})}
\prod_{\substack{j = 1 \\ j\neq i, i+1}}^{n}
\frac{dt_{j}}{t_{j}-a_{j}} 
& \mbox{ if }1\leq i\leq n-1,
\\[+.5em] %
\displaystyle
\, -\frac{1}{a_{n}-a_{n+1}}\, 
L\left(e_{a_{1}}\cdots e_{a_{n-1}}\widehat{e_{a_{n}}}\right) 
& \mbox{ if }i=n.
\end{cases}
\end{multline*}
Here we regard the domain of the integration 
in the definition of $J_{n-1}(a_0,\dots,a_{n+1})$ as 
$\{(t_1, \dots, t_{n-1}) \in {\mathbb R}^{n-1}~|~ 
a_0 < t_1 < \cdots < t_{n-1} < a_{n+1} \}$.

\item
For $1\leq i\leq n-1$ with $a_{i} \neq a_{i+1}$, we have 
\begin{equation} \label{eq:J-i-decomp}
  J_i(a_{0},\ldots,a_{n+1})
  =\frac{1}{a_{i}-a_{i+1}}\bigl\{ 
 L(e_{a_1}\cdots\widehat{e_{a_{i+1}}}\cdots e_{a_{n}})- 
 L(e_{a_1}\cdots\widehat{e_{a_{i}}}\cdots e_{a_{n}})
 \bigr\}. 
\end{equation}
\end{enumerate}
\end{lemma}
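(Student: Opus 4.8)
The plan is to prove both parts directly from the integral representation $L(w)=\int_{0<t_1<\cdots<t_n<1}\prod_{j=1}^{n}\frac{dt_j}{t_j-a_j}$, using throughout that $z\notin[0,1]$ so that each factor $\frac{1}{t_i-z}$ is smooth on the closed simplex. The two parts are essentially (1) differentiation under the integral followed by one integration by parts in the relevant variable, and (2) a partial-fraction split.

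For part (1), I would first differentiate under the integral sign. Since $z$ enters the integrand only through the factors $\frac{1}{t_i-z}$ at the positions $i$ with $a_i=z$, this gives $\frac{d}{dz}L(w)=\sum_{i:\,a_i=z}\int_{0<t_1<\cdots<t_n<1}\frac{1}{(t_i-z)^2}\prod_{j\neq i}\frac{dt_j}{t_j-a_j}$; the interchange is legitimate because on any compact $z$-set the factors $\frac{1}{t_i-z}$ and $\frac{1}{(t_i-z)^2}$ are bounded uniformly for $t\in[0,1]^n$, giving an integrable dominating function. Next, for each such $i$ I would integrate out the single variable $t_i$ first (Fubini), using $\int_{t_{i-1}}^{t_{i+1}}\frac{dt_i}{(t_i-z)^2}=\frac{1}{t_{i-1}-z}-\frac{1}{t_{i+1}-z}$ with the conventions $t_0=0$, $t_{n+1}=1$; the inner integral has no pole on its path since $z\notin[0,1]$. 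Substituting $z=a_i$, the lower term $\frac{1}{t_{i-1}-z}$ merges with the existing factor $\frac{1}{t_{i-1}-a_{i-1}}$ to reproduce exactly the integrand of $J_{i-1}$, while the upper term $-\frac{1}{t_{i+1}-z}$ becomes (after renaming the $i$-th remaining variable) $-J_i$. The boundary indices $i=1$ (where $t_0=a_0=0$) and $i=n$ (where $t_{n+1}=a_{n+1}=1$) collapse to the special cases $J_0$ and $J_n$ of the definition, and summing over all $i$ with $a_i=z$ yields the asserted identity.

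For part (2), the key step is the partial-fraction identity $\frac{1}{(t_i-a_i)(t_i-a_{i+1})}=\frac{1}{a_i-a_{i+1}}\Bigl(\frac{1}{t_i-a_i}-\frac{1}{t_i-a_{i+1}}\Bigr)$, valid precisely because $a_i\neq a_{i+1}$. Inserting this into the integral defining $J_i$ and relabeling the $n-1$ ordered variables $t_1<\cdots<t_i<t_{i+2}<\cdots<t_n$ as a single increasing chain, each resulting integral is literally a hyperlogarithm of a word with one letter deleted: the term carrying $\frac{1}{t_i-a_i}$ equals $L(e_{a_1}\cdots\widehat{e_{a_{i+1}}}\cdots e_{a_n})$ and the term carrying $\frac{1}{t_i-a_{i+1}}$ equals $L(e_{a_1}\cdots\widehat{e_{a_i}}\cdots e_{a_n})$; multiplying by $\frac{1}{a_i-a_{i+1}}$ and subtracting gives the claimed formula.

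The step I expect to be delicate is not the algebra but the bookkeeping of convergence. The double pole $\frac{1}{(t_i-z)^2}$ is harmless on the domain because $z\notin[0,1]$, but the individual objects $J_i$ (and hence the two pieces produced by the partial fraction) need not converge as stand-alone integrals: deleting a letter can expose a leading $e_0$ or a trailing $e_1$. For instance, when $a_1=z$ and $a_2=0$ the term $J_0=\frac{1}{a_0-a_1}L(\widehat{e_{a_1}}e_{a_2}\cdots e_{a_n})$ diverges. The point to verify carefully is that such divergent contributions occur only inside the finite differences $J_{i-1}-J_i$, where the would-be singularities at the simplex boundary cancel; indeed $\bigl(-\frac{1}{z}-\frac{1}{t_2-z}\bigr)\frac{1}{t_2}=\frac{-1}{z(t_2-z)}$ stays bounded as $t_2\to0$. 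Thus parts (1) and (2) must be read as identities of these convergent combinations, the equalities holding with matching orders of divergence in the exceptional cases; making this rigorous (by first integrating over $t_j>\varepsilon$ and letting $\varepsilon\to0$, or by passing to shuffle-regularized values) is where the real work lies.
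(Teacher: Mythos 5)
Your proof follows the paper's argument essentially step for step: part (1) is differentiation under the integral sign followed by integrating out the double-pole variable $t_i$ via $\int_{t_{i-1}}^{t_{i+1}}\frac{dt_i}{(t_i-z)^2}=\frac{1}{t_{i-1}-z}-\frac{1}{t_{i+1}-z}$, identifying the two boundary terms with $J_{i-1}$ and $-J_i$ (the cases $i=1$ and $i=n$ giving $J_0$ and $J_n$), and part (2) is the same partial-fraction decomposition. The convergence caveat you raise --- that individual $J_i$'s can diverge when deleting a letter exposes a leading $e_0$ or trailing $e_1$ (e.g.\ $w=e_ze_0$, where both $J_0$ and $J_1$ diverge but their difference converges) --- is a genuine subtlety that the paper's own proof passes over in silence, so your treatment is, if anything, more careful than the original.
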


\begin{proof}
Differentiating $L(w) = I(a_{0};a_{1},\ldots,a_{n};a_{n+1})$ 
with respect to $z$, we have
\begin{eqnarray}
\frac{d}{dz}L(w) 
& =  & 
\frac{da_1}{dz} \int_{a_0<t_2<\cdots<t_{n}<a_{n+1}} 
\biggl( \prod_{j=2}^{n} \frac{dt_j}{t_j - a_j} \biggr)
\left( \int^{t_2}_{a_0}\frac{dt_1}{(t_1 - a_1)^2} \right) 
\nonumber \\
&  & 
+ \sum_{i=2}^{n-1} \frac{da_i}{dz} 
\int_{a_0<t_1<\cdots<t_{i-1}<t_{i+1}< \cdots<t_{n}<a_{n+1}} 
\biggl(\, \prod_{\substack{j=1 \\ j \ne i}}^{n} 
\frac{dt_j}{t_j - a_j} \biggr)
\left( \int^{t_{i+1}}_{t_{i-1}} \frac{dt_i}{(t_i - a_i)^2} \right)
\nonumber \\
&  &
+ \frac{da_n}{dz} \int_{a_0<t_1< \cdots<t_{n-1}<a_{n+1}}
\biggl(\, \prod_{j=1}^{n-1} \frac{dt_j}{t_j - a_j} \biggr)
\left( \int^{a_{n+1}}_{t_{n-1}} \frac{dt_n}{(t_n - a_n)^2} \right).
\label{eq:dL-dz}
\end{eqnarray}
The first line in the right hand side of \eqref{eq:dL-dz} 
can be reduced to 
\begin{align*}
& 
\frac{da_1}{dz}
\int_{a_0<t_2<\cdots<t_{n}<a_{n+1}} 
\biggl( \prod_{j=2}^{n} \frac{dt_j}{t_j - a_j} \biggr)
\left( \int^{t_2}_{a_0}\frac{dt_1}{(t_1 - a_1)^2} \right)  \\
& = 
\frac{da_1}{dz} \int_{a_0<t_2<\cdots<t_{n}<a_{n+1}} 
\biggl( \prod_{j=2}^{n} \frac{dt_j}{t_j - a_j} \biggr)
\left( \frac{1}{a_0 - a_1} - \frac{1}{t_2 - a_1}\right)  \\
& = 
\frac{da_1}{dz} \, \bigl( 
J_{0}(a_0,\dots,a_{n+1}) - J_1(a_0,\dots,a_{n+1}) 
\bigr).
\end{align*}
By a similar computation, the second and 
the third lines in \eqref{eq:dL-dz} are reduced to 
\[
\sum_{i=1}^{n-1} \frac{da_i}{dz}  \,
\bigl( J_{i-1}(a_0,\dots, a_{n+1}) - J_i(a_0,\dots, a_{n+1}) \bigr)
\]
and 
\[
\frac{da_n}{dz} \,
\bigl( J_{n-1}(a_0,\dots, a_{n+1}) - J_n(a_0,\dots, a_{n+1}) \bigr),
\]
respectively. Thus we have proved (1). 

The claim (2) follows from the partial fraction decomposition
\[
\frac{dt_{i}}{(t_{i}-a_i)(t_{i}-a_{i+1})}=
\frac{1}{a_i-a_{i+1}}\left( 
\frac{dt_{i}}{t_{i}-a_i}-\frac{dt_{i}}{t_{i}-a_{i+1}}
\right).
\]
\end{proof}

\begin{proof}[Proof of Theorem \ref{thm:differential-formula}]
The claim (1) of Lemma \ref{lem: diff J} implies
\[
\frac{d}{dz}L(w) = S_1 + S_2, 
\]
where 
\[
S_1 = 
\sum_{\substack{1\leq i\leq n\\a_{i}=z}}
J_{i-1}(a_{0}\ldots,a_{n+1}), 
\quad 
S_2 = 
-\sum_{\substack{1\leq i\leq n\\a_{i}=z}}
J_{i}(a_0,\ldots,a_{n+1}) 
\]
We divide $S_1$ (resp., $S_2$) as 
$S_1 = S_1^{({\rm I})} + S_1^{({\rm II})}$ 
(resp., $S_2 = S_2^{({\rm I})} + S_2^{({\rm II})}$), where
\begin{eqnarray}
S_1^{({\rm I})} = 
\sum_{\substack{1\leq i\leq n\\a_{i-1}=a_{i}=z}}
J_{i-1}(a_{0},\dots,a_{n+1}), \label{eq:005} 
& &  
S_1^{({\rm II})}  = 
\sum_{\substack{1\leq i\leq n\\a_{i-1}\neq a_{i}=z}}
J_{i-1}(a_{0},\dots,a_{n+1}), \\ 
%
S_2^{({\rm I})} = 
-\sum_{\substack{1\leq i\leq n\\a_{i}=a_{i+1}=z}}
J_{i}(a_{0},\dots,a_{n+1}), \label{eq:007} 
&  &
S_2^{({\rm II})} =
-\sum_{\substack{1\leq i\leq n\\a_{i+1}\neq a_{i}=z}}
J_{i}(a_0,\dots,a_{n+1}). 
\end{eqnarray}
Since $a_{0},a_{n+1}\neq z$, 
$S_1^{({\rm I})}$ and $S_2^{({\rm I})}$
can also be expressed as 
\begin{eqnarray*}
S_1^{(\rm I)} =  
\sum_{\substack{2\leq i\leq n\\a_{i-1}=a_{i}=z}}
J_{i-1}(a_{0},\dots,a_{n+1}), \quad 
S_2^{(\rm I)} = 
-\sum_{\substack{1\leq i\leq n-1\\a_{i}=a_{i+1}=z}}
J_{i}(a_{0},\dots,a_{n+1})
\end{eqnarray*}
Therefore, $S_1^{({\rm I})} + S^{({\rm I})}_2$ vanishes identically. 
On the other hand, using \eqref{eq:J-i-decomp} we obtain
\begin{align}
S_1^{({\rm II})} & =  
\sum_{\substack{1\leq i\leq n\\a_{i-1}\neq z,a_{i}=z}}
\frac{1}{a_{i-1}-z}
L(e_{a_{1}}\cdots\widehat{e_{a_{i}}}\cdots e_{a_{n}})-
\sum_{\substack{1\leq i\leq n\\a_{i}\neq z,a_{i+1}=z}}
\frac{1}{a_{i}-z}
L(e_{a_{1}}\cdots\widehat{e_{a_{i}}}\cdots e_{a_{n}})\nonumber \\
& =\sum_{b\in\{0,1\}}\frac{1}{z-b}L\left(\sum_{1\leq i\leq n}
(-\delta_{(a_{i-1},a_{i}),(b,z)}+\delta_{(a_{i},a_{i+1}),(b,z)})
e_{a_{1}}\cdots\widehat{e_{a_{i}}}\cdots e_{a_{n}}\right).
\label{eq:009}
\end{align}
and
\begin{align}
S_2^{({\rm II})} &= 
 -\sum_{\substack{1\leq i\leq n\\a_{i-1}=z,a_{i}\neq z}}
\frac{1}{z-a_{i}}
L(e_{a_{1}}\cdots\widehat{e_{a_{i}}}\cdots e_{a_{n}})+
\sum_{\substack{1\leq i\leq n\\a_{i}=z,a_{i+1}\neq z}}
\frac{1}{z-a_{i+1}}L(e_{a_{1}}\cdots\widehat{e_{a_{i}}}\cdots e_{a_{n}})
\nonumber \\
 & =\sum_{b\in\{0,1\}}\frac{1}{z-b}L\left(\sum_{1\leq i\leq n}
 (-\delta_{(a_{i-1},a_{i}),(z,b)}+\delta_{(a_{i},a_{i+1}),(z,b)})
 e_{a_{1}}\cdots\widehat{e_{a_{i}}}\cdots e_{a_{n}}\right)
 \label{eq:010}
\end{align}
respectively, where we put
\begin{eqnarray*}
\delta_{(a,b), (x,y)} := 
\begin{cases} 
1 & \text{if $a=x$ and $b=y$} \\
0 & \text{otherwise}.
\end{cases}
\end{eqnarray*}
Therefore, since $\delta_{\{a,b\}, \{x,y \}}
 = \delta_{(a,b), (x,y)} + \delta_{(a,b), (y,x)}$
for $x \ne y$, summing up \eqref{eq:009} and \eqref{eq:010}, 
we obtain 
\[
\frac{d}{dz}L(e_{a_{1}}\cdots e_{a_{n}})=\sum_{b\in\{0,1\}}
\frac{1}{z-b}L(\partial_{z,b}(e_{a_{1}}\cdots e_{a_{n}}))
\]
which proves \eqref{eq:differential-formula} for $n \ge 2$. 
This completes the proof of Theorem \ref{thm:differential-formula}.
\end{proof}

\begin{example} \label{example:differential}
Here we show several explicit examples of the differential formula.
\begin{itemize}
\item 
For $w\in\mathbb{Q}\left\langle e_{0},e_{1}\right\rangle$
whose last letter is not $e_1$, we have 
\begin{eqnarray} \label{eq:e_a1}
\begin{cases}
\displaystyle
\frac{d}{dz}L(e_{z}e_{0}w) = -\frac{1}{z}L(e_{z}w), 
\\[+1.em]
\displaystyle
\frac{d}{dz}L(e_{z}e_{1}w) =
-\frac{1}{z-1}L(e_{z}w)
+\left(\frac{1}{z-1}-\frac{1}{z} \right)L(e_{1}w). 
\end{cases}
\end{eqnarray}
\item 
For $w\in\mathbb{Q}\left\langle e_{0},e_{z}\right\rangle$, 
we have
\begin{eqnarray}  \label{eq:e_a3}
\begin{cases}
\displaystyle 
\frac{d}{dz}L(e_{z}we_{0}) =
-\frac{1}{z}L(e_{z}w), 
\\[+1.em]
\displaystyle 
\frac{d}{dz}L(e_{1}e_{z}we_{0}) =
-\frac{1}{z}L(e_{1}e_{z}w)
+\frac{1}{z}L(e_{1}we_{0}) \\[+.3em]
\displaystyle \hspace{+8.3em} 
+\frac{1}{z-1}L((e_{z}-e_{1})we_{0}) \\[+1.em]
\displaystyle
\frac{d}{dz}L(e_{1}e_{0}we_{0}) = 
\frac{1}{z}L(e_{1}we_{0})
-\frac{1}{z}L(e_{1}e_{0}w).
\end{cases}
\end{eqnarray}
\end{itemize}

\end{example}

\section{Proof of main theorem}
\label{section:proof}

\subsection{Proof of Theorem \ref{thm:B-duality}}

This subsection is devoted to giving a proof of 
the duality formula (Theorem \ref{thm:B-duality}).
The result will be used to prove Theorem \ref{thm:sum-formula} 
in next subsection.

\begin{proof}[Proof of Thoerem \ref{thm:B-duality}]
Let $\tau$ be the anti-automorphism on $\mathcal{A}$, 
which was introduced in Section \ref{section:introduction},  
defined as
\[ 
\tau(e_0)=e_z-e_1,\ \tau(e_1)=e_z-e_0,\ \tau(e_z)=e_z.
\]
Note that ${\tau}$ can be restricted to an anti-automorphism 
on the subset ${\mathcal A}^0$ of convergent words.
Let $\gamma$ be the M\"obius transformation 
\[ 
\gamma(t) = \frac{zt-z}{t-z}
\]
which acts on ${\mathbb P}^1$ and induces 
a permutation of $\{0,1,z,\infty\}$ as
\[
\gamma(0)=1,\quad 
\gamma(1)=0,\quad 
\gamma(\infty)=z,\quad 
\gamma(z)=\infty.
\]
The M\"obius transform $\gamma$ induces 
a linear transformation on the holomorphic 
1-forms $\omega_a(t)={dt}/{(t-a)} ~~ (a\in\{0,1,z\})$ 
on $\mathbb{P}^1-\{0,1,z,\infty\}$ of the forms
\[
\omega_0(t)=\omega_1(t')-\omega_z(t'),\quad 
\omega_1(t)=\omega_0(t')-\omega_z(t'),\quad 
\omega_z(t)=-\omega_z(t') \qquad (t'=\gamma(t)).
\]
For arbitrary real number $z$ satisfying $z > 1$, we can verify that 
$\gamma$ maps the segment $[0,1] \subset {\mathbb P}^1$ to itself 
with opposite orientation. 
Thus, we obtain the desired formula 
\begin{equation} \label{eq:duality}
L(w - \tau(w)) = 0 \quad (w \in {\mathcal A}^{0})
\end{equation}
for such values of $z$. After the analytic continuation, 
we can conclude that the equality \eqref{eq:duality}
holds for any $z \in {\mathbb C} \setminus [0,1]$ 
by the identity theorem.
This completes the proof of Theorem \ref{thm:B-duality}.
\end{proof}

\begin{remark}
\begin{itemize}
\item
For any convergent word $w \in {\mathcal A}^0$, we know that 
\begin{equation} \label{eq:limit-itint}
\lim_{z \to \infty} L(w) = L(w|_{e_z = 0})
\end{equation}
holds. 
In particular, for any 
$w \in {\mathbb Q}\langle e_0, e_1 \rangle \cap {\mathcal A}^0$, 
the limit $z \to \infty$ of our duality formula \eqref{eq:duality}
is reduced to 
\begin{equation} \label{eq:duality-traditional}
\lim_{z \to \infty} L(w - \tau(w))  = 
L(w - \tau_{\infty}(w)) = 0,
\end{equation} 
where $\tau_{\infty}$ is anti-endomorphism on 
${\mathcal A}^{0}$ 
defined by
\[
\tau_{\infty}(e_0) = -e_1,\quad 
\tau_{\infty}(e_1) = -e_0, \quad
\tau_{\infty}(e_z) = 0.
\]
The equality \eqref{eq:duality-traditional} 
is nothing but the duality formula  \eqref{eq:duality-MZV} 
for multiple zeta values 
(see \cite{Hoffman} and \cite{Zagier}).

\item
Evaluating \eqref{eq:duality} at $z = -1$, we also have 
\[ 
L(w-\tau(w)) \, \bigl|_{z=-1} = 0
\quad( w\in \mathcal{A}^0).
\]
This relation is called the Broadhurst duality formula
\cite{Broadhurst} (see also \cite{Okuda}).
\end{itemize}
\end{remark}

\begin{example}
Substituting $w=e_{z}^{n}e_0e_{z}^m \ (n\ge1,m\ge0)$ 
into the theorem gives the following three term relation:
\[ 
I(0;\{z\}^n,0,\{z\}^m;1)+I(0;\{z\}^m,1,\{z\}^n;1)
-I(0;\{z\}^{n+m+1};1)=0.
\]
\end{example}

\subsection{Proof of Theorem \ref{thm:sum-formula}}

In this subsection we prove 
Theorem \ref{thm:sum-formula} (the sum formula).

\begin{proof}[Proof of Theorem \ref{thm:sum-formula}]
For integers $k \ge 2$ and $r \ge 1$ satisfying $k \ge r$, put
\begin{eqnarray*}
f_{k,r}(z) & = & \sum_{\substack{k_{1}+\cdots+k_{r}=k\\
k_{i}\ge1,~k_{r}\ge2}}
L(e_{z}e_{0}^{k_{1}-1}e_{1}e_{0}^{k_{2}-1}
\cdots e_{1}e_{0}^{k_{r}-1})\\
g_{k,r}(z) & = & L((e_{1}-e_{z})(e_{0}-e_{z})^{r-1}e_{0}^{k-r})\\
h_{k,r} & = & \sum_{\substack{k_{1}+\cdots+k_{r}=k\\
k_{i}\ge1,~k_{r}\ge2}}
L(e_{1}e_{0}^{k_{1}-1}e_{1}e_{0}^{k_{2}-1}
\cdots e_{1}e_{0}^{k_{r}-1}) 
~~ = \lim_{z \to 1 + 0} f_{k,r}(z).
\end{eqnarray*}
Theorem \ref{thm:sum-formula} is equivalent to
\begin{equation}
\label{eq:e_thm}
(-1)^{r}f_{k,r}(z)=-L(e_{1}e_{0}^{k-1})+g_{k,r}(z).
\end{equation}

We prove Theorem \ref{thm:sum-formula} by the induction on $k$. 
The case $k=2$ is obvious from the definition. 
Fix $k\geq3$ and assume that
\begin{equation} 
(-1)^{r'}f_{k-1,r'}(z)=-L(e_{1}e_{0}^{k-2})+g_{k-1,r'}(z)\qquad
(1\leq r'\leq k-1)
\label{eq:e_1}
\end{equation}
hold. 
Our goal is to prove \eqref{eq:e_thm} for any $r$ satisfying
$1 \le r \le k$ under this induction hypothesis.

The equality \eqref{eq:e_thm} for the case of $r=1$ follows 
from the definition. The case of $r=k$ follows from 
the duality formulas \eqref{eq:duality} and 
\eqref{eq:duality-traditional} 
proved in the previous subsection as follows: 
\begin{align*}
L((e_1-e_z)(e_0 - e_z)^{k-1}) 
& = (-1)^{k} L(\tau(e_1^{k-1}e_0)) \\
& = (-1)^{k} L(e_1^{k-1}e_0) \\
& = L(\tau_{\infty}(e_1 e_0^{k-1})) 
= L(e_1 e_0^{k-1}).
\end{align*}
Thus, to proceed our induction, it is enough to prove 
\eqref{eq:e_thm} for the case of $1<r<k$. In that situation, 
the differential formula \eqref{eq:differential-formula} 
shows the following. (C.f., Example \ref{example:differential}.) 

\begin{lemma} \label{lemma:derivative-fkr}
For integers $k, r$ satisfying $1<r<k$, 
the following equalities hold.
\begin{eqnarray}
\frac{d}{dz}f_{k,r}(z) & = &
-\frac{1}{z}f_{k-1,r}(z)-\frac{1}{z-1}f_{k-1,r-1}(z)
+\left( \frac{1}{z-1}-\frac{1}{z}\right) 
h_{k-1,r-1},
\label{eq:e_2} \\
\frac{d}{dz}g_{k,r}(z) 
& = & 
-\frac{1}{z}g_{k-1,r}(z)+\frac{1}{z-1}g_{k-1,r-1}(z).
\label{eq:e_3}
\end{eqnarray}
\end{lemma}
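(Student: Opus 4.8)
The plan is to prove both \eqref{eq:e_2} and \eqref{eq:e_3} by differentiating termwise with the differential formula \eqref{eq:differential-formula} and then re-summing the resulting words into the functions on the right-hand sides. The two functions behave quite differently: $f_{k,r}$ is a sum of honest words each containing a \emph{single} $e_z$, so its derivative is read off directly from Example \ref{example:differential}, whereas $g_{k,r}$ is the image under $L$ of one polynomial $W_{k,r}:=(e_1-e_z)(e_0-e_z)^{r-1}e_0^{k-r}$ carrying many $e_z$'s, so there the work is to compute $\partial_{z,0}W_{k,r}$ and $\partial_{z,1}W_{k,r}$.

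For \eqref{eq:e_2}, I would note that in each word $e_z e_0^{k_1-1}e_1\cdots e_1 e_0^{k_r-1}$ the only occurrence of $e_z$ is the leading letter, so its $z$-derivative is given by \eqref{eq:e_a1}. I split the defining sum according to whether $k_1\ge 2$ or $k_1=1$. When $k_1\ge 2$ the leading block is $e_ze_0(\cdots)$ and the first line of \eqref{eq:e_a1} contributes $-\tfrac{1}{z}$ times the word with one $e_0$ deleted from the first block; summing over all admissible compositions reproduces exactly $-\tfrac{1}{z}f_{k-1,r}(z)$. When $k_1=1$ the leading block is $e_ze_1(\cdots)$ and the second line of \eqref{eq:e_a1} yields a $-\tfrac{1}{z-1}$-term whose words re-sum to $f_{k-1,r-1}(z)$ (the leading $e_z$ now heading the remaining $r-1$ blocks) together with a $\left(\tfrac{1}{z-1}-\tfrac{1}{z}\right)$-term whose words, carrying $e_1$ in place of the deleted $e_z$, re-sum to $h_{k-1,r-1}$. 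The only care required is to match the index sets of the three resulting composition sums with those defining $f_{k-1,r}$, $f_{k-1,r-1}$ and $h_{k-1,r-1}$; collecting the pieces gives \eqref{eq:e_2}.

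For \eqref{eq:e_3}, I apply \eqref{eq:differential-formula} directly to $W_{k,r}$ using $\Q$-linearity of $\partial_{z,0},\partial_{z,1},L$, so that the statement reduces to the two combinatorial identities
\[
\partial_{z,0}W_{k,r}=-W_{k-1,r},\qquad \partial_{z,1}W_{k,r}=W_{k-1,r-1},
\]
after which \eqref{eq:e_3} is immediate from $\tfrac{d}{dz}L(W_{k,r})=\tfrac{1}{z}L(\partial_{z,0}W_{k,r})+\tfrac{1}{z-1}L(\partial_{z,1}W_{k,r})$. The identity for $\partial_{z,1}$ is the easy one: since the leading factor is the only source of $e_1$ and the tail $e_0^{k-r}$ is nonempty (as $r<k$), the only $\{z,1\}$-adjacencies in any monomial of $W_{k,r}$ occur when the leading factor contributes $e_1$ and the second factor $(e_0-e_z)$ contributes $e_z$; the two deletions prescribed by $\partial_{z,1}$ then combine, with the expansion signs, into $(e_1-e_z)(e_0-e_z)^{r-2}e_0^{k-r}=W_{k-1,r-1}$.

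The hard part will be $\partial_{z,0}W_{k,r}=-W_{k-1,r}$, since $W_{k,r}$ has many $\{z,0\}$-adjacencies whose contributions must cancel by telescoping. I plan to organize this by recursion on the tail length: writing $W_{k,r}=W_{k-1,r}\,e_0$, a direct check of how appending a final $e_0$ affects the adjacencies shows $\partial_{z,0}(U\,e_0)=(\partial_{z,0}U)\,e_0$ whenever every monomial of $U$ ends in $e_0$ (the boundary correction terms vanish precisely because no monomial ends in $e_z$). This drives the induction down to the tail-free base $W_{r+1,r}=(e_1-e_z)(e_0-e_z)^{r-1}e_0$, which must be treated by an explicit telescoping over the block $(e_0-e_z)^{r-1}$, where the left boundary letter $0$ (from the convention $a_0=0$) and the trailing $e_0$ make the internal deletions cancel in pairs. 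Carrying out this base-case cancellation cleanly, while tracking the signs from expanding $(e_0-e_z)^{r-1}$, is the main obstacle; once it is in place the two displayed identities assemble into \eqref{eq:e_3} without further difficulty.
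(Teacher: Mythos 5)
Your treatment of \eqref{eq:e_2} is correct and coincides with the paper's: every summand of $f_{k,r}$ contains a single $e_z$, at the head, so \eqref{eq:e_a1} applies term by term, and splitting the composition sum according to $k_1\ge 2$ versus $k_1=1$ reassembles the three terms on the right-hand side. For \eqref{eq:e_3}, however, you take a genuinely different route. The paper never manipulates the operators $\partial_{z,a}$ at this stage; it expands only the leading factors, writing
\[
g_{k,r}(z) = L(e_1e_0(e_0-e_z)^{r-2}e_0^{k-r}) - L(e_1e_z(e_0-e_z)^{r-2}e_0^{k-r}) - L(e_z(e_0-e_z)^{r-1}e_0^{k-r}),
\]
and then quotes the three formulas \eqref{eq:e_a3} from Example \ref{example:differential}, which hold for an \emph{arbitrary} $w\in\Q\langle e_0,e_z\rangle$, i.e.\ word by word, with no interference between monomials; \eqref{eq:e_3} then drops out by linearity. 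You instead propose the operator identities $\partial_{z,0}W_{k,r}=-W_{k-1,r}$ and $\partial_{z,1}W_{k,r}=W_{k-1,r-1}$ and a single application of Theorem \ref{thm:differential-formula}. Both identities are in fact true, and your arguments for $\partial_{z,1}$ and for the reduction $\partial_{z,0}(Ue_0)=(\partial_{z,0}U)e_0$ (valid when every monomial of $U$ ends in $e_0$) are sound.

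The gap is exactly where you locate it: the base case $\partial_{z,0}\bigl[(e_1-e_z)(e_0-e_z)^{r-1}e_0\bigr]=-(e_1-e_z)(e_0-e_z)^{r-1}$ is asserted with only a heuristic (``internal deletions cancel in pairs''), and it is the one step of your argument that cannot be checked monomial by monomial. Indeed the identity fails for individual monomials: $\partial_{z,0}(e_1e_0e_0)=0$, not $-e_1e_0$, while $\partial_{z,0}(e_1e_ze_0)=e_1e_0-e_1e_z$; only the signed sum over the expansion of $(e_0-e_z)^{r-1}$ produces the claimed answer. To close the gap: for a word $u$ in the letters $e_0,e_z$, a block-by-block computation gives $\partial_{z,0}(e_zue_0)=-e_zu$ (so the monomials of $W_{r+1,r}$ beginning with $e_z$ already satisfy the identity individually) and $\partial_{z,0}(e_1ue_0)=e_1D(u)e_0-e_1u$, where $D(u)$ deletes one letter from the initial constant block of $u$. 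Since $D(cw)=w$ for either letter $c\in\{0,z\}$, each word $w$ arises as $D(u)$ from exactly the two words $u=e_0w$ and $u=e_zw$, which carry opposite signs in the expansion of $(e_0-e_z)^{r-1}$; hence $\sum_u \mathrm{sign}(u)\,e_1D(u)e_0=0$ and only $-e_1(e_0-e_z)^{r-1}$ survives, completing the base case. With this pairing argument supplied (or by simply switching to the paper's decomposition above, which avoids all cross-monomial cancellation), your proof is complete.
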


\begin{proof}[Proof of Lemma \ref{lemma:derivative-fkr}]
The equality \eqref{eq:e_2} can be easily derived from \eqref{eq:e_a1}. 
Since 
\[
g_{k,r}(z) = L(e_1e_0(e_0-e_z)^{r-2}e_0^{k-r}) - 
L(e_1e_z(e_0-e_z)^{r-2}e_0^{k-r}) - 
L(e_z(e_0-e_z)^{r-1}e_0^{k-r}), 
\]
the equality \eqref{eq:e_a3} implies \eqref{eq:e_3}.
\end{proof}

Let us take the limit  $z\to1+0$ in \eqref{eq:e_1}. 
Since $\lim_{z\to1+0} g_{k,r}(z) = 0$, we have 
\begin{equation}
(-1)^{r-1}h_{k-1,r-1}=-L(e_{1}e_{0}^{k-2}).
\label{eq:e_4}
\end{equation}
This also shows that $(-1)^{r} h_{k,r}$ is in fact independent of $r$.
From \eqref{eq:e_2}, \eqref{eq:e_3}, \eqref{eq:e_4}, 
and the assumption of the induction, we have
\[
\frac{d}{dz} \bigl( (-1)^{r}f_{k,r}(z)-g_{k,r}(z) \bigr)=0.
\]
This implies that $(-1)^{r}f_{k,r}(z)-g_{k,r}(z)$ is independent of $z$.
Since we have
\[
\lim_{z\to\infty} f_{k,r}(z) = 0, \quad
\lim_{z\to\infty} g_{k,r}(z) = L(e_{1}e_{0}^{k-1})
\]
by \eqref{eq:limit-itint}, 
we can conclude that \eqref{eq:e_thm} holds.
Thus, Theorem \ref{thm:sum-formula} is proved.
\end{proof}

Finally, we remark that the sum formula \eqref{eq:sum-MZV} 
for multiple zeta values (\cite{Granville, Zagier})
is obtained from \eqref{eq:e_thm}
after taking the limit $z \to 1 + 0$.


\end{document}